\documentclass[12pt,leqno]{article}
\usepackage{hyperref}
\usepackage{soul}
\usepackage[doc]{optional}
\usepackage{tikz}
\usepackage{xcolor}
\definecolor{labelkey}{rgb}{0,0.08,0.45}
\definecolor{rekey}{rgb}{0,0.6,0.0}
\definecolor{Brown}{rgb}{0.45,0.0,0.05}
\usepackage{exscale,relsize}
\usepackage{amsmath}
\usepackage{amsfonts}
\usepackage{amssymb}
\usepackage{calc}
\usepackage{theorem}
\usepackage{pifont}      
\usepackage{graphicx}
\oddsidemargin -0.1cm
\textwidth  16.5cm
\topmargin  0.0cm
\headheight 0.0cm
\textheight 21.0cm
\parindent  4mm
\parskip    10pt
\tolerance  3000

\newcommand{\scal}[2]{\langle{{#1},{#2}}\rangle}

\newcommand{\RR}{\ensuremath{\mathbb R}}

\newcommand{\RX}{\ensuremath{\,\left]-\infty,+\infty\right]}}
\newcommand{\RXX}{\ensuremath{\,\left[-\infty,+\infty\right]}}

\newcommand{\NN}{\ensuremath{\mathbb N}}

\newcommand{\menge}[2]{\big\{{#1} \mid {#2}\big\}}

\newcommand{\To}{\ensuremath{\rightrightarrows}}

\newcommand{\di}{\ensuremath{\operatorname{dist}}}

\newcommand{\dom}{\ensuremath{\operatorname{dom}}}

\newcommand{\gra}{\ensuremath{\operatorname{gra}}}

\newcommand{\inte}{\ensuremath{\operatorname{int}}}

\newcommand{\ran}{\ensuremath{\operatorname{ran}}}

\newcommand{\conv}{\ensuremath{\operatorname{conv}}}

\renewcommand{\phi}{\ensuremath{\varphi}}

\newtheorem{theorem}{Theorem}[section]

\newtheorem{fact}[theorem]{Fact}
\newtheorem{corollary}[theorem]{Corollary}
\newtheorem{proposition}[theorem]{Proposition}
\newtheorem{definition}[theorem]{Definition}

\theoremstyle{plain}{\theorembodyfont{\rmfamily}
}
\theoremstyle{plain}{\theorembodyfont{\rmfamily}
}
\theoremstyle{plain}{\theorembodyfont{\rmfamily}
}
\theoremstyle{plain}{\theorembodyfont{\rmfamily}
}
\theoremstyle{plain}{\theorembodyfont{\rmfamily}
\newtheorem{remark}[theorem]{Remark}}

\theoremstyle{plain}{\theorembodyfont{\rmfamily}
}


\begin{document}


\title{\sffamily{Some results on the  convexity of the closure of the domain of a maximally monotone operator
 }}

\author{
 Jonathan M.
Borwein\thanks{CARMA, University of Newcastle, Newcastle, New South
Wales 2308, Australia. E-mail:
\texttt{jonathan.borwein@newcastle.edu.au}.  Distinguished Professor
King Abdulaziz University, Jeddah. }\;
and Liangjin\
Yao\thanks{CARMA, University of Newcastle,
 Newcastle, New South Wales 2308, Australia.
E-mail:  \texttt{liangjin.yao@newcastle.edu.au}.}}
 \vskip 3mm

\date{May 20, 2012}
\maketitle

\begin{abstract} \noindent
We provide a concise analysis about what is known regarding  when the closure of the domain of a maximally monotone operator on an arbitrary real Banach space  is convex.
In doing so, we also provide an affirmative answer to a problem posed by Simons.
\end{abstract}

\noindent {\bfseries 2010 Mathematics Subject Classification:}\\
{Primary 47H05;
Secondary 26B25,47A05, 47B65.
}

\noindent {\bfseries Keywords:}
Nearly convex set,
Fitzpatrick function,
maximally monotone operator,
monotone operator,
set-valued operator.

\section{Introduction}

As discussed in \cite{Bor1,Bor2,Bor3,BorVan}, the two most central open questions in monotone operator theory are probably in a general real Banach space whether (i) the sum of two maximally monotone operators $A+B$ is maximally monotone under \emph{Rockafellar's constraint qualification} $\dom A \cap \inte \dom B\neq \emptyset$ \cite{Rock70}; and
whether (ii) $ \overline{\dom A}$ is always convex.
Rockafellar showed that $\overline{\dom A}$ is convex if $\inte\dom A\neq\varnothing$ \cite{Rock69}.

Since a positive answer to various restricted versions of (i) implies a positive answer to (ii) \cite{BorVan,Si2}, we have determined to make what progress we may with the later question.

\subsection{Preliminary matters}

Throughout this note, we assume that
$X$ is a general real Banach space with norm $\|\cdot\|$,
that $X^*$ is its continuous dual space, and that $\scal{\cdot}{\cdot}$ denotes
the usual pairing between these spaces.
Let $A\colon X\To X^*$
be a \emph{set-valued operator} (also known as multifunction)
from $X$ to $X^*$, i.e., for every $x\in X$, $Ax\subseteq X^*$,
and let
$\gra A = \menge{(x,x^*)\in X\times X^*}{x^*\in Ax}$ be
the \emph{graph} of $A$.
Then $A$ is said to be \emph{monotone} if
\begin{equation}
\big(\forall (x,x^*)\in \gra A\big)\big(\forall (y,y^*)\in\gra
A\big) \quad \scal{x-y}{x^*-y^*}\geq 0,
\end{equation}
and \emph{maximally monotone} if
no proper enlargement
(in the sense of graph inclusion) of $A$ is monotone.
 We say $(x,x^*)$ is \emph{monotonically related to}
$\gra A$ if
\begin{align*}
\langle x-y,x^*-y^*\rangle\geq0,\quad \forall (y,y^*)\in\gra
A.\end{align*}
We say $A$ is a \emph{linear relation} if $\gra A$ is a linear subspace.
Let $A:X\rightrightarrows X^*$ be such that $\gra A\neq\varnothing$. The
\emph{Fitzpatrick function} associated with $A$ \cite{Fitz88} is defined by
\begin{equation*}
F_A\colon X\times X^*\to\RX\colon
(x,x^*)\mapsto \sup_{(a,a^*)\in\gra A}
\big(\scal{x}{a^*}+\scal{a}{x^*}-\scal{a}{a^*}\big).
\end{equation*}
Monotone operators have proven to be a key class of objects
in modern Optimization and Analysis; see, e.g.,
the books
\cite{BC2011,BorVan,BurIus,ph,Si,Si2,RockWets,Zalinescu,Zeidlerlin,Zeidler} and the references therein.

Throughout, we adopt standard convex analysis notation.
Given a subset $C$ of $X$,
$\inte C$ is the \emph{interior} of $C$,
$\overline{C}$ is   the
\emph{norm closure} of $C$, and $\conv{C}$ is   the
\emph{convex hull} of $C$.
The \emph{indicator function} of $C$, written as $\iota_C$, is defined
at $x\in X$ by
\begin{align}
\iota_C (x)=\begin{cases}0,\,&\text{if $x\in C$;}\\
+\infty,\,&\text{otherwise}.
\end{cases}\end{align}
The \emph{distance function}  to   the set $C$, written as $\di (\cdot, C)$,
is defined by
$x\mapsto \inf_{c\in C}\|x-c\|$.
 For every $x\in
X$, the \emph{normal cone} operator of $C$ at $x$ is defined by
$N_C(x)= \menge{x^*\in X^*}{\sup_{c\in C}\scal{c-x}{x^*}\leq 0}$, if
$x\in C$; and $N_C(x):=\varnothing$, if $x\notin C$.

We also say a set $C$ is \emph{nearly convex} if $\overline{C}$ is convex.

Let $f\colon X\to \RX$. Then
$\dom f= f^{-1}(\RR)$ is the \emph{domain} of $f$, and
$f^*\colon X^*\to\RXX\colon x^*\mapsto
\sup_{x\in X}(\scal{x}{x^*}-f(x))$ is
the \emph{Fenchel conjugate} of $f$.
We say $f$ is proper if $\dom f\neq\varnothing$.
Let $f$ be proper. The \emph{subdifferential} of
$f$ is defined by
   $$\partial f\colon X\To X^*\colon
   x\mapsto \{x^*\in X^*\mid(\forall y\in
X)\; \scal{y-x}{x^*} + f(x)\leq f(y)\}.$$
Let $Y$ be another real Banach space. We denote  $P_X: X\times Y\rightarrow
X\colon (x,y)\mapsto x$ and  $\NN:=\{1,2,3,\ldots\}$.

The paper is organized as follows.
In Section~\ref{s:aux}, we collect auxiliary results for future
reference and for the reader's convenience.

In Section~\ref{s:main}, we first present a sufficient condition for the closure of the domain of a maximally monotone operator $A$ to be convex in Theorem~\ref{coroll:nc2}.  To the best of our knowledge, this recaptures  and extends all known general conditions for  $\dom A$ to be nearly convex.
We then turn to our second
main result
(Theorem~\ref{ProdLm:3}) provides an affirmative answer to a problem
 posed by  Simons in
 \cite[Problem~28.3, page~112]{Si2}.

\begin{quotation}
\noindent
\emph{Let $A:X\rightrightarrows X^*$ be maximally monotone.
 Is it necessarily
true that
\begin{align*}
\overline{P_X\left[\dom F_A\right]}=\overline{\conv\left[\dom A\right]}\, ?
\end{align*}
}
 \end{quotation}
 Simons\cite{Si05}, showed the answer is yes under the constraint that
$\inte\dom A\neq\varnothing$. (This is also an immediate consequence of Corollary~\ref{cor:near} below.)

\section{Auxiliary Results}
\label{s:aux}

The basic building blocks we appeal to are as follows. We first record Fitzpatrick's result connecting $A$ to the corresponding Fitzpatrick function.

\begin{fact}[Fitzpatrick]
\emph{(See {\cite[Proposition~3.2, Theorem~3.4 and Corollary~3.9]{Fitz88}}.)}
\label{f:Fitz}
Let $A\colon X\To X^*$ be monotone with $\dom A\neq\varnothing$.
Then $F_A$ is proper lower semicontinuous, convex and $F_A=\langle\cdot,\cdot\rangle$ on $\gra A$. Moreover, if $A$ is maximally monotone,
for every $(x,x^*)\in X\times X^*$, the inequality
$$\scal{x}{x^*}\leq F_A(x,x^*)$$ is true,
and equality holds if and only if $(x,x^*)\in\gra A$.
\end{fact}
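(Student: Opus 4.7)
The plan is to first establish the three structural properties --- convexity, lower semicontinuity, and properness --- for any monotone $A$ with $\gra A\neq\varnothing$, and then to exploit maximal monotonicity in a separate step.

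Observe that $F_A$ is, by construction, a pointwise supremum over $(a,a^*)\in\gra A$ of the functions $(x,x^*)\mapsto \langle x,a^*\rangle+\langle a,x^*\rangle-\langle a,a^*\rangle$, each of which is continuous and affine on $X\times X^*$ with the product norm topology. Hence $F_A$ is automatically lower semicontinuous and convex. For the identity $F_A=\langle\cdot,\cdot\rangle$ on $\gra A$ (and for properness), I would fix $(a,a^*)\in\gra A$ and use monotonicity: for every $(b,b^*)\in\gra A$ one has $\langle a-b,a^*-b^*\rangle\geq 0$, which rearranges to
\[
\langle a,b^*\rangle+\langle b,a^*\rangle-\langle b,b^*\rangle \;\leq\; \langle a,a^*\rangle.
\]
Taking the supremum over $(b,b^*)\in\gra A$ yields $F_A(a,a^*)\leq \langle a,a^*\rangle$, while the reverse inequality comes simply from the single term $(b,b^*)=(a,a^*)$ inside the supremum. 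Thus $F_A(a,a^*)=\langle a,a^*\rangle\in\RR$, which simultaneously gives properness and the identity on the graph.

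Next, assuming $A$ is maximally monotone, I would fix $(x,x^*)\in X\times X^*$ and split on whether $(x,x^*)\in\gra A$. If it is, the previous paragraph delivers equality. If it is not, then maximality precludes $(x,x^*)$ from being monotonically related to $\gra A$, so some $(a,a^*)\in\gra A$ satisfies $\langle x-a,x^*-a^*\rangle<0$; expanding and rearranging gives
\[
\langle x,x^*\rangle \;<\; \langle x,a^*\rangle+\langle a,x^*\rangle-\langle a,a^*\rangle \;\leq\; F_A(x,x^*).
\]
This single inequality simultaneously supplies $\langle x,x^*\rangle\leq F_A(x,x^*)$ in general and the ``only if'' direction of the equality statement; the ``if'' direction was already handled.

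There is no real obstacle here: everything rides on the algebraic observation that monotonicity of $A$ is \emph{precisely} the statement that each affine functional defining $F_A$ is dominated by $\langle\cdot,\cdot\rangle$ on $\gra A$, and that maximality upgrades this domination to a strict inequality off $\gra A$. The only mildly subtle point is invoking the contrapositive reformulation of maximality in terms of monotonically related pairs, which is immediate from the definition.
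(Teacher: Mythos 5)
Your argument is correct and is the standard one: the paper itself gives no proof of this Fact, citing it directly from Fitzpatrick's 1988 paper, and your proof reproduces that classical argument faithfully. Each step checks out --- lower semicontinuity and convexity from the supremum-of-continuous-affine-functions structure, the identity $F_A=\langle\cdot,\cdot\rangle$ on $\gra A$ (hence properness, since the supremum over the nonempty graph also rules out the value $-\infty$) from rearranging monotonicity plus evaluating the defining supremum at $(b,b^*)=(a,a^*)$, and the strict inequality off the graph from the contrapositive characterization of maximality via monotonically related pairs.
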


We turn to  two technical results of Simons that we shall exploit.

\begin{fact}[Simons]
\emph{(See \cite[Lemma~19.18]{Si2}.)}
\label{Siimu:1}
Let $A:X\To X^*$ be monotone with $\gra A\neq\varnothing$ and $(z,z^*)\in\dom F_A$.
 Let $F:\gra A\rightarrow\RR$ be such that $\inf F>0$. Then
there exists $r\in\RR$ such that
\begin{align*}
\frac{\langle z-a,z^*-a^*\rangle}{F(a,a^*)}\geq r,\quad \forall (a,a^*)\in\gra A.
\end{align*}
\end{fact}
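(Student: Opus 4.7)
The plan is to exploit the hypothesis $(z,z^*)\in\dom F_A$ through the algebraic identity
\[
\langle z,a^*\rangle+\langle a,z^*\rangle-\langle a,a^*\rangle=\langle z,z^*\rangle-\langle z-a,z^*-a^*\rangle,
\]
which, upon taking the supremum over $(a,a^*)\in\gra A$, rearranges to
\[
F_A(z,z^*)=\langle z,z^*\rangle-\inf_{(a,a^*)\in\gra A}\langle z-a,z^*-a^*\rangle.
\]
Since $F_A(z,z^*)<+\infty$ by assumption, this forces the quantity
\[
m:=\inf_{(a,a^*)\in\gra A}\langle z-a,z^*-a^*\rangle
\]
to be a finite real number. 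This is the crucial observation: membership of $(z,z^*)$ in $\dom F_A$ is exactly equivalent to a uniform lower bound on the numerator of the ratio we wish to control.

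With $m\in\RR$ in hand, the remainder is an elementary sign analysis. For each $(a,a^*)\in\gra A$ I would split into two cases. If $\langle z-a,z^*-a^*\rangle\geq 0$, then the ratio is nonnegative because $F(a,a^*)>0$. If instead $\langle z-a,z^*-a^*\rangle<0$, then in particular $m<0$, and combining $\langle z-a,z^*-a^*\rangle\geq m$ with $F(a,a^*)\geq \inf F>0$ yields
\[
\frac{\langle z-a,z^*-a^*\rangle}{F(a,a^*)}\geq\frac{m}{\inf F}.
\]
Setting $r:=\min\{0,\,m/\inf F\}$ then provides a uniform lower bound valid for every $(a,a^*)\in\gra A$.

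The main obstacle is really just spotting the algebraic rearrangement that rewrites the Fitzpatrick expression as an affine function of the monotonicity gap $\langle z-a,z^*-a^*\rangle$; once that identity is in place, the rest of the argument is a one-line case split together with a single division by the positive quantity $\inf F$. I would not anticipate any functional-analytic subtlety, as neither completeness of $X$ nor any separation argument is invoked.
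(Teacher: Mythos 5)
Your argument is correct: the identity $\langle z,a^*\rangle+\langle a,z^*\rangle-\langle a,a^*\rangle=\langle z,z^*\rangle-\langle z-a,z^*-a^*\rangle$ shows that $F_A(z,z^*)<+\infty$ is exactly the statement that $m:=\inf_{(a,a^*)\in\gra A}\langle z-a,z^*-a^*\rangle=\langle z,z^*\rangle-F_A(z,z^*)$ is finite (the infimum over the nonempty set $\gra A$), and your case split with $r=\min\{0,m/\inf F\}$ is valid since $F\geq\inf F>0$. The paper itself gives no proof of this Fact --- it is quoted from Simons' book --- but your route is the natural one and matches the standard argument behind it; note in passing that monotonicity of $A$ is never used, only $\gra A\neq\varnothing$ and $(z,z^*)\in\dom F_A$.
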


\begin{fact}[Simons]
\emph{(See \cite[Thereom~27.5]{Si2}.)}
\label{Sidomain:1}
Let $A:X\To X^*$ be monotone. Assume the implication that
\begin{align*}
z\notin \overline{\dom A}\quad\Rightarrow\quad
\sup_{(a,a^*)\in\gra A}
\frac{\langle z-a, a^*\rangle}{\| z-a\|}=+\infty\end{align*}
holds. Then
\begin{align*}\overline{\dom A}=\overline{\conv\left[\dom A\right]}=\overline{P_X\left[\dom F_A\right]},\end{align*}
and hence $\dom A$ is nearly convex: $\overline{\dom A}$ is convex, since $P_X\left[\dom F_A\right]$ is the projection of a convex set.
\end{fact}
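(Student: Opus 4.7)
The plan is to show the single non-trivial inclusion $\overline{P_X[\dom F_A]} \subseteq \overline{\dom A}$; the remaining containments then fall out formally. Indeed, by Fact~\ref{f:Fitz} we have $\gra A \subseteq \dom F_A$, so $\dom A \subseteq P_X[\dom F_A]$; and since $F_A$ is convex, $\dom F_A$ and hence $P_X[\dom F_A]$ are convex. This gives for free the chain
\begin{equation*}
\overline{\dom A} \;\subseteq\; \overline{\conv\dom A} \;\subseteq\; \overline{P_X[\dom F_A]}.
\end{equation*}

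For the reverse inclusion, I would argue by contradiction: suppose $z \in P_X[\dom F_A]$ but $z \notin \overline{\dom A}$. Pick $z^* \in X^*$ with $(z,z^*) \in \dom F_A$, and set $d := \di(z,\dom A) > 0$. The key idea is to feed the weight function $F\colon \gra A \to \RR\colon (a,a^*) \mapsto \|z-a\|$ into Fact~\ref{Siimu:1}: this $F$ satisfies $\inf F \geq d > 0$, so the lemma furnishes some $r \in \RR$ with
\begin{equation*}
\frac{\langle z-a,\, z^*-a^*\rangle}{\|z-a\|} \;\geq\; r \qquad \forall (a,a^*)\in\gra A.
\end{equation*}

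The next step is a one-line rearrangement: writing the numerator as $\langle z-a, z^*\rangle - \langle z-a, a^*\rangle$ and using Cauchy--Schwarz on the first term, one obtains
\begin{equation*}
\frac{\langle z-a,\, a^*\rangle}{\|z-a\|} \;\leq\; \|z^*\| - r \qquad \forall (a,a^*)\in\gra A,
\end{equation*}
which directly contradicts the standing hypothesis that $\sup_{(a,a^*)\in\gra A} \langle z-a,a^*\rangle/\|z-a\| = +\infty$. Hence no such $z$ exists, the reverse inclusion holds, and the three closures coincide; near convexity of $\dom A$ follows because $P_X[\dom F_A]$ is already convex.

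The only real obstacle is spotting the correct weight $F(a,a^*) := \|z-a\|$ to apply Fact~\ref{Siimu:1}; everything else is bookkeeping. A minor technical point to verify is that $d > 0$ (needed for $\inf F > 0$), which is exactly what $z \notin \overline{\dom A}$ supplies.
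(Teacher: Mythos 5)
Your argument is correct, and it is essentially the argument the paper itself uses: while Fact~\ref{Sidomain:1} is quoted from Simons without proof, your proof coincides with the paper's proof of the analogous Proposition~\ref{SiCodomain:1} (stated there to follow Simons' Theorem~27.5), namely applying Fact~\ref{Siimu:1} with the weight $F(a,a^*)=\|z-a\|$ and rearranging to bound $\sup_{(a,a^*)\in\gra A}\langle z-a,a^*\rangle/\|z-a\|$ by $\|z^*\|-r$, contradicting the hypothesis. The formal chain $\overline{\dom A}\subseteq\overline{\conv[\dom A]}\subseteq\overline{P_X[\dom F_A]}$ via convexity of $P_X[\dom F_A]$ is also handled correctly, so nothing is missing.
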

Let us recall two definitions:

 \begin{definition}[(FPV) and (BR)]\label{def0}
 Let $A:X\To X^*$ be maximally monotone.  We say $A$ is
\emph{type \emph{Fitzpatrick-Phelps-Veronas
}(FPV)} if  for every open convex set $U\subseteq X$ such that
$U\cap \dom A\neq\varnothing$, the implication
\begin{equation*}
x\in U\,\text{and}\,(x,x^*)\,\text{is monotonically related to $\gra
A\cap (U\times X^*)$} \Rightarrow (x,x^*)\in\gra A
\end{equation*}
holds.\\
We say $A$ is of \emph{of  ``Br{\o}nsted-Rockafellar" (BR) type} \cite{Si6}
if whenever $(x,x^*)\in X\times X^*$, $\alpha,\beta>0$ while
\begin{align*}\inf_{(a,a^*)\in\gra A} \langle x-a,x^*-a^*\rangle
>-\alpha\beta\end{align*} then there exists $(b,b^*)\in\gra A$ such
that $\|x-b\|<\alpha,\|x^*-b^*\|<\beta$.
 \end{definition}

Note that if $\le$ is used throughout we get a nominally slightly stronger property \cite{VV4}. We also remark that unlike most properties studied for monotone operators (BR) is an isometric property not a priori preserved under Banach space isomorphism \cite{BBWY}.

 Likewise  in \cite{VV1,VV4} a notion of \emph{Verona regularity}
 of a maximally monotone operator $A$ is introduced and studied.
In \cite[Theorems 1\&2]{VV6} it is shown that $A$ is Verona regular,
 if and only if, $A+M$ is maximally monotone
 whenever $M$ is maximally monotone and has bounded range, if and only if,
   $A+ \lambda\partial \| \cdot -x\|$ is maximally monotone for all $\lambda>0, x \in X$.

We finish this section with a result on linear mappings.

\begin{fact}\emph{(See \cite[Corollary~3.3]{Yao2}.)}\label{domain:L1}
Let $A:X\To X^*$ be  a maximally monotone linear relation, and $f:X\rightarrow\RX$ be a
proper lower semicontinuous convex function with
 $\dom  A\cap\inte\dom\partial f\neq\varnothing$.
Then $A+\partial f$ is of type  $(FPV)$. In particular $A$ is of type $(FPV)$.
\end{fact}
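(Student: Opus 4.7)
The plan is to verify the (FPV) condition by contradiction, relying on two ingredients: (i) the maximal monotonicity of $A+\partial f$ under the given constraint qualification --- a specialisation of Rockafellar's sum theorem to a maximally monotone linear relation plus a subdifferential, known in general Banach space --- and (ii) Simons' quotient bound from Fact~\ref{Siimu:1} for a limiting step. The ``in particular'' assertion about $A$ alone then follows by taking $f\equiv 0$, whence $\partial f\equiv\{0\}$ and $\inte\dom\partial f=X$.

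Suppose $U\subseteq X$ is open and convex with $U\cap\dom(A+\partial f)\neq\varnothing$, and let $(x,x^*)\in U\times X^*$ be monotonically related to $\gra(A+\partial f)\cap(U\times X^*)$. If $(x,x^*)\notin\gra(A+\partial f)$, maximality yields $(y,y^*)\in\gra(A+\partial f)$ with $\langle x-y,x^*-y^*\rangle<0$, and necessarily $y\notin U$. Decompose $y^*=a^*+b^*$ with $a^*\in Ay$ and $b^*\in\partial f(y)$, pick $x_0\in U\cap\dom A\cap\inte\dom\partial f$ together with selectors $a_0^*\in Ax_0$ and $b_0^*\in\partial f(x_0)$, and for $t\in[0,1]$ set $y_t:=(1-t)y+tx_0$. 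Openness of $U$, together with $x_0\in U$ and $y\notin U$, produces a threshold $t^*\in[0,1)$ for which $y_t\in U$ exactly when $t\in(t^*,1]$. Linearity of the relation $A$ gives $(1-t)a^*+ta_0^*\in Ay_t$, and since $x_0\in\inte\dom f$ while $y\in\dom f$, convexity yields $y_t\in\inte\dom f\subseteq\dom\partial f$ for $t\in(0,1]$, so we may select $b_t^*\in\partial f(y_t)$. The monotonic-relation hypothesis applied at $(y_t,(1-t)a^*+ta_0^*+b_t^*)\in\gra(A+\partial f)\cap(U\times X^*)$ yields
\begin{align*}
\langle x-y_t,\,x^*-(1-t)a^*-ta_0^*-b_t^*\rangle \geq 0 \qquad (t\in(t^*,1]).
\end{align*}

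To pass to the limit $t\downarrow t^*$, apply Fact~\ref{Siimu:1} with $(z,z^*)=(x,x^*)\in\dom F_{A+\partial f}$ (membership verified by a localisation of $F_{A+\partial f}$ to $U\times X^*$) and a growth weight such as $F(a,\alpha^*):=1+\|a-x\|+\|\alpha^*-x^*\|$: this yields a uniform lower bound on the monotonicity quotient which, evaluated on our family of selections, controls $\|b_t^*\|$ on $(t^*,1]$. Extracting a $\wk$-cluster point $\bar b^*\in\partial f(y_{t^*})$ and invoking the linear-relation identity $(1-t^*)a^*+t^*a_0^*\in Ay_{t^*}$, the limit inequality combines with $\langle x-y,x^*-y^*\rangle<0$ (after a short affine calculation in $t$ near $t^*$) to produce a contradiction. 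The main obstacle is precisely this limit step: since $y_{t^*}$ may lie on $\bd\dom\partial f$, selections $b_t^*\in\partial f(y_t)$ can fail to be uniformly bounded as $t\downarrow t^*$, and Fact~\ref{Siimu:1} is the essential tool supplying the quantitative control needed for the extraction; a minor auxiliary point is justifying $(x,x^*)\in\dom F_{A+\partial f}$ from the monotonic-relation hypothesis on $U\times X^*$.
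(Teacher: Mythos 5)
This Fact is not proved in the paper at all; it is quoted from \cite[Corollary~3.3]{Yao2}, so your proposal has to stand on its own, and as written it has several genuine gaps. The most basic one is the selection of $x_0\in U\cap\dom A\cap\inte\dom\partial f$: the (FPV) hypothesis only supplies a point of $U\cap\dom(A+\partial f)$, while the constraint qualification supplies a point of $\dom A\cap\inte\dom\partial f$ that need not lie in $U$; nothing guarantees these two sets meet inside $U$, and your entire segment construction (which needs $y_t\in U$ for $t$ near $1$ and $y_t\in\inte\dom f$ for $t>0$) hinges on this unavailable point. Second, your appeal to Fact~\ref{Siimu:1} requires $(x,x^*)\in\dom F_{A+\partial f}$, but monotone relatedness to $\gra(A+\partial f)\cap(U\times X^*)$ only bounds the supremum defining $F_{A+\partial f}(x,x^*)$ over graph points lying above $U$; over the whole graph it may be $+\infty$, so this ``minor auxiliary point'' is itself unjustified. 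Third---the step you yourself flag as the main obstacle---Fact~\ref{Siimu:1} gives only a \emph{lower} bound on the monotonicity quotient, and a lower bound cannot prevent $\|b_t^*\|\to\infty$ as $y_t$ approaches the possibly boundary point $y_{t^*}$ of $\dom f$; the weak* cluster point $\bar b^*\in\partial f(y_{t^*})$ therefore need not exist (indeed $y_{t^*}$ may not even lie in $\dom\partial f$). Finally, even granting the limit inequality at $t^*$, it concerns $y_{t^*}$ and says nothing about $y=y_0$ when $t^*>0$; no mechanism (the ``short affine calculation'') is exhibited that transports it back to contradict $\langle x-y,x^*-y^*\rangle<0$, and monotonicity of $A$ and $\partial f$ along the segment does not do this by itself.

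A secondary but important point: you describe the maximal monotonicity of $A+\partial f$ as a specialisation of Rockafellar's sum theorem ``known in general Banach space.'' Rockafellar's theorem is a reflexive-space result; in an arbitrary Banach space the maximal monotonicity of $A+\partial f$ under $\dom A\cap\inte\dom\partial f\neq\varnothing$ is precisely the main theorem of the cited source \cite{Yao2}, so you may assume it, but it cannot by itself yield (FPV): there is no general argument deducing (FPV) from maximal monotonicity (whether every maximally monotone operator is (FPV) is open). The actual derivation of \cite[Corollary~3.3]{Yao2} is substantially more elaborate than your sketch, and the places where your outline is vague---the location of $x_0$, the finiteness of $F_{A+\partial f}(x,x^*)$, the control of the subgradient selections near $\bd\dom f$, and the passage from $t^*$ back to $y$---are exactly where the real work lies.
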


\section{Main results}
\label{s:main}

A useful general criterion for \eqref{conv} to hold is the following result abstracted from \cite{VV4}.
Let us denote $J_p := \frac 1p \partial \| \cdot \|^p.$ Thus, $J_2=J$ is the classical duality map, while $J_1$ is a maximally monotone operator with  bounded  range.

\begin{proposition}[Domain near convexity]\label{prop:nc}
Let $ 1 \leq p < \infty$ be given. Suppose that  $A$ is  monotone and that for all $z\notin\overline{\dom A}$ and all sufficiently large $\lambda >0$
 \begin{align}\label{cq:mm} (z,0)\,~ \mbox{\rm  is not monotonically related to} \gra \big(A+  \lambda J_p(\cdot -z)\big).\end{align}
 Then
   \begin{align}\label{conv} \overline{\dom A}=\overline{\conv\left[\dom A\right]}=\overline{P_X\left[\dom F_A\right]},\end{align}
   and in particular $\dom A$ is nearly convex.
   \end{proposition}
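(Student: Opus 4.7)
The plan is to reduce to Fact~\ref{Sidomain:1}: it suffices to establish the implication
\[ z\notin\overline{\dom A}\ \Longrightarrow\ \sup_{(a,a^*)\in\gra A}\frac{\langle z-a,a^*\rangle}{\|z-a\|}=+\infty, \]
because once this holds, Fact~\ref{Sidomain:1} directly delivers both equalities in \eqref{conv} and the near convexity of $\dom A$. So the real work is to translate the hypothesis about $(z,0)$ failing to be monotonically related to $\gra\bigl(A+\lambda J_p(\cdot-z)\bigr)$ into a lower bound on the quotient $\langle z-a,a^*\rangle/\|z-a\|$ that blows up with $\lambda$.

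Fix $z\notin\overline{\dom A}$ and set $d:=\di(z,\overline{\dom A})>0$. For each sufficiently large $\lambda>0$ the hypothesis furnishes some $(a,w^*)\in\gra\bigl(A+\lambda J_p(\cdot-z)\bigr)$ with $\langle z-a,\,0-w^*\rangle<0$. Decompose $w^*=a^*+\lambda j^*$ with $a^*\in Aa$ and $j^*\in J_p(a-z)$; then $a\in\dom A$, so $\|a-z\|\geq d>0$, and the failure of monotonicity reads
\[ \langle z-a,\,a^*\rangle \;>\; \lambda\,\langle a-z,\,j^*\rangle. \]

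The next step is the Euler identity for the positively homogeneous function $\tfrac1p\|\cdot\|^p$ of degree $p$: any $j^*\in J_p(a-z)=\tfrac{1}{p}\partial\|\cdot\|^p(a-z)$ satisfies $\langle a-z,j^*\rangle=\|a-z\|^p$ (from comparing the subgradient inequality at $t(a-z)$ for $t\to 1^\pm$). Substituting this in,
\[ \frac{\langle z-a,\,a^*\rangle}{\|z-a\|} \;>\; \lambda\,\|a-z\|^{p-1} \;\geq\; \lambda\, d^{\,p-1}. \]
Since $\lambda$ can be chosen arbitrarily large and the factor $d^{p-1}$ is a strictly positive constant (trivially $1$ when $p=1$), the supremum is $+\infty$, as required.

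I expect no serious obstacle: the only non-elementary ingredient is the Euler-type identity $\langle x,x^*\rangle=\|x\|^p$ for $x^*\in J_p(x)$, which is a standard consequence of positive homogeneity of $\|\cdot\|^p$. The rest is mechanical rearrangement of the defining inequality, together with the crucial observation that $z\notin\overline{\dom A}$ provides the uniform gap $\|a-z\|\geq d$ needed to handle the $p>1$ case (this is the one place where $z\notin\dom A$ alone would not suffice). The $p=1$ specialization cleanly recovers the hypothesis of Fact~\ref{Sidomain:1} without any further estimate.
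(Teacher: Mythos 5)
Your proof is correct and follows essentially the same route as the paper: rearrange the failure of monotone relatedness into $\langle z-a,a^*\rangle>\lambda\|a-z\|^{p}$ via the identity $\langle x,x^*\rangle=\|x\|^{p}$ for $x^*\in J_p(x)$, use the positive distance $\di(z,\overline{\dom A})$ to divide and bound below by $\lambda\,d^{\,p-1}$, and then invoke Fact~\ref{Sidomain:1}. The only cosmetic difference is that you spell out the Euler-type identity that the paper uses implicitly.
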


\begin{proof} Let $z\in X$ be such that
\begin{align}z\notin\overline{\dom A}\label{FPCGG:1}.\end{align}
\allowdisplaybreaks
We have $\alpha:= \di(z, \overline{\dom A})>0$.
Since $(z,0)$  is not monotonically related to $\gra\big(A +\lambda J_p(-z+\cdot)\big)$, there exist
$(a_{\lambda}, a_{\lambda}^*)\in\gra A$ and $b^*_{\lambda}\in  J_p(-z+a_{\lambda})$ such that
\begin{align*}
&\langle
 z-a_{\lambda}, {a^*_{\lambda}}+\lambda b^*_{\lambda}\rangle>0\\
&\Rightarrow \langle
 z-a_{\lambda}, {a^*_{\lambda}}\rangle>\lambda\langle -z+a_{\lambda}, b^*_{\lambda}\rangle\\
&\Rightarrow \langle
 z-a_{\lambda}, {a^*_{\lambda}}\rangle>\lambda\| -z+a_{\lambda}\|^p
 \quad \text{(by $b^*_{\lambda}\in J_p(-z+a_{\lambda})$)}\\
&\Rightarrow
\frac{ \langle z-a_{\lambda}, a^*_{\lambda}\rangle}{\| -z+a_{\lambda}\|}>
\lambda\| -z+a_{\lambda}\|^{p-1}\quad \text{(since $\| -z+a_{\lambda}\|\geq\alpha>0$)}\\
&\Rightarrow
\frac{\langle z-a_{\lambda}, a^*_{\lambda}\rangle}{\| -z+a_{\lambda}\|}
> \lambda\alpha^{p-1}\quad \text{(by $\| -z+a_{\lambda}\|\geq\alpha$)}\\
&\Rightarrow \sup_{\lambda}
\frac{\langle z-a_{\lambda}, a^*_{\lambda}\rangle}{\| z-a_{\lambda}\|}=+\infty.
\end{align*}

 By Fact~\ref{Sidomain:1}, $\overline{\dom A}=\overline{\conv\left[\dom A\right]}=\overline{P_X\left[\dom F_A\right]}$.
Hence  $\overline{\dom A}$ is convex.
\end{proof}

We now provide a potentially weaker criterion for  equality of $\overline{\conv\left[\dom A\right]}=\overline{P_X\left[\dom F_A\right]}$ for a monotone operator.

\begin{proposition}
\label{SiCodomain:1}
Let $A:X\To X^*$ be monotone. Assume the implication that
\begin{align}
z\notin\overline{\conv\left[\dom A\right]}\quad\Rightarrow\quad
\sup_{(a,a^*)\in\gra A}
\frac{\langle z-a, a^*\rangle}{\| z-a\|}=+\infty \label{Codm:ca1}\end{align}
holds. Then
\begin{align*}\overline{\conv\left[\dom A\right]}=\overline{P_X\left[\dom F_A\right]}.\end{align*}
\end{proposition}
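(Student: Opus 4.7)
The plan is to split the claimed equality into two inclusions and handle each separately, with the nontrivial direction reducing to Fact~\ref{Siimu:1}.

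First I would establish the easy inclusion $\overline{\conv[\dom A]}\subseteq \overline{P_X[\dom F_A]}$. By Fact~\ref{f:Fitz}, $F_A=\langle\cdot,\cdot\rangle$ on $\gra A$, so for every $(a,a^*)\in\gra A$ one has $F_A(a,a^*)=\langle a,a^*\rangle<+\infty$, whence $a\in P_X[\dom F_A]$. Thus $\dom A\subseteq P_X[\dom F_A]$. Since $F_A$ is convex (again Fact~\ref{f:Fitz}), $\dom F_A$ and its projection $P_X[\dom F_A]$ are convex, so $\conv[\dom A]\subseteq P_X[\dom F_A]$, and taking closures gives the desired inclusion.

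For the reverse inclusion, I would argue by contradiction: suppose some $z\in P_X[\dom F_A]$ lies outside $\overline{\conv[\dom A]}$. Pick $z^*\in X^*$ with $(z,z^*)\in\dom F_A$. Since $\dom A\subseteq\overline{\conv[\dom A]}$, the point $z$ is at positive distance from $\dom A$, so the function $F(a,a^*):=\|z-a\|$ on $\gra A$ satisfies $\inf F>0$. Applying Fact~\ref{Siimu:1} with this $F$ yields some $r\in\RR$ with
\[
\frac{\langle z-a,z^*-a^*\rangle}{\|z-a\|}\geq r,\qquad\forall (a,a^*)\in\gra A.
\]
Rearranging, and bounding $\langle z-a,z^*\rangle\leq\|z^*\|\cdot\|z-a\|$ by Cauchy--Schwarz, gives
\[
\frac{\langle z-a,a^*\rangle}{\|z-a\|}\leq \|z^*\|-r,\qquad\forall (a,a^*)\in\gra A,
\]
so the supremum in \eqref{Codm:ca1} is finite. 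This contradicts the standing hypothesis and shows $P_X[\dom F_A]\subseteq\overline{\conv[\dom A]}$; taking closures completes the proof.

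The only substantive step is the application of Fact~\ref{Siimu:1}; the proper choice of weight $F(a,a^*)=\|z-a\|$ is what converts the Fitzpatrick-domain assumption into the linear bound needed to contradict \eqref{Codm:ca1}. Once this choice is identified, everything else is routine manipulation.
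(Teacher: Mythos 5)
Your proposal is correct and follows essentially the same route as the paper: the easy inclusion via Fact~\ref{f:Fitz}, then a contradiction argument applying Fact~\ref{Siimu:1} with the weight $F(a,a^*)=\|z-a\|$ and the bound $\langle z-a,z^*\rangle\leq\|z^*\|\,\|z-a\|$ (which is just the norm--dual pairing estimate, not literally Cauchy--Schwarz) to cap the supremum in \eqref{Codm:ca1} by $\|z^*\|-r$.
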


\begin{proof}
By Fact~\ref{f:Fitz}, $\conv\left[\dom A\right]\subseteq P_X\left[\dom F_A\right]$
 and thus $\overline{\conv\left[\dom A\right]}\subseteq\overline{P_X\left[\dom F_A\right]}$.
It suffices to show that
\begin{align}
P_X\left[\dom F_A\right]\subseteq\overline{\conv\left[\dom A\right]}. \label{Codm:c1}
\end{align}
Suppose to the contrary that there exists $(z,z^*)\in\dom F_A$
such that $z\notin \overline{\conv\left[\dom A\right]}$.
Define $F:\gra A: (a,a^*)\mapsto\|z-a\|$. We have $\inf F=\di(z, \dom A)>0$.
Then by Fact~\ref{Siimu:1}, there exists $r\in\RR$ such that
for every $(a,a^*)\in\gra A$,
\begin{align*}
\|z^*\|-\frac{\langle z-a,a^*\rangle}{\|z-a\|}\geq\frac{\langle z-a,z^*\rangle}{\|z-a\|}-\frac{\langle z-a,a^*\rangle}{\|z-a\|}=\frac{\langle z-a,z^*-a^*\rangle}{\|z-a\|}\geq r.
\end{align*}
Then we have
\begin{align*}
\sup_{(a,a^*)\in\gra A}\frac{\langle z-a,a^*\rangle}{\|z-a\|}\leq \|z^*\|- r<+\infty, ~\text{which contradicts
\eqref{Codm:ca1}}.
\end{align*}
Hence \eqref{Codm:c1} holds.
\end{proof}

The proof of Proposition~\ref{SiCodomain:1} above   follows  closely that of \cite[Thereom~27.5]{Si2}.
We are now able to establish our first core result.

\begin{theorem}\label{coroll:nc2}
Let $ 1 \leq p < \infty$ be given and $A: X\rightrightarrows X^*$ be  monotone.
\begin{enumerate}\item \label{part:i}
First suppose that for all $z\notin\overline{\dom A}$ and all sufficiently large $\lambda >0$
 \begin{align*} A+  \lambda J_p(\cdot -z)\,~ \mbox{\rm  is maximally monotone}.\end{align*}
 Then $\dom A$ is nearly convex since
   \begin{align*}\overline{\dom A}=\overline{\conv\left[\dom A\right]}=\overline{P_X\left[\dom F_A\right]}.\end{align*}
   \item \label{part:ii}If we
  only suppose  that for all $z\notin\overline{\conv\left[\dom A\right]}$ and all sufficiently large $\lambda >0$
 \begin{align*} (z,0)\,~ \mbox{\rm  is not monotonically related to} \gra \big(A+  \lambda J_p(\cdot -z)\big).\end{align*}
 Then
\begin{align*}\overline{\conv\left[\dom A\right]}=\overline{P_X\left[\dom F_A\right]}.
\end{align*}
   \end{enumerate}
   \end{theorem}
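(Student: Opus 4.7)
The plan is to deduce both parts of the theorem from the two preceding propositions, using only the elementary fact that $\dom J_p = X$ for every $p \geq 1$. For part~\ref{part:i} the target is the hypothesis \eqref{cq:mm} of Proposition~\ref{prop:nc}; for part~\ref{part:ii} the target is the hypothesis \eqref{Codm:ca1} of Proposition~\ref{SiCodomain:1}.

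For part~\ref{part:i}, I would fix $z \notin \overline{\dom A}$ and take $\lambda$ large enough that $A_\lambda := A + \lambda J_p(\cdot - z)$ is maximally monotone. Since $\dom J_p = X$, one has $\dom A_\lambda = \dom A$; hence $z \notin \dom A_\lambda$ and in particular $(z,0) \notin \gra A_\lambda$. Maximal monotonicity then promotes ``not in the graph'' to ``not monotonically related to the graph,'' which is exactly \eqref{cq:mm}. Proposition~\ref{prop:nc} now yields the full chain of equalities claimed in part~\ref{part:i}.

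For part~\ref{part:ii}, I would repeat the calculation in the proof of Proposition~\ref{prop:nc} with the weaker constant $\alpha := \di(z,\overline{\conv[\dom A]})$ in place of $\di(z,\overline{\dom A})$, and then invoke Proposition~\ref{SiCodomain:1}. Fix $z \notin \overline{\conv[\dom A]}$, so that $\alpha > 0$. For each sufficiently large $\lambda$ the hypothesis produces $(a_\lambda, a^*_\lambda) \in \gra A$ and $b^*_\lambda \in J_p(-z + a_\lambda)$ with $\langle z - a_\lambda,\, a^*_\lambda + \lambda b^*_\lambda\rangle > 0$. Since $a_\lambda \in \dom A \subseteq \overline{\conv[\dom A]}$, one still has $\|z - a_\lambda\| \geq \alpha$, and the same chain of implications from the proof of Proposition~\ref{prop:nc} yields
\[\frac{\langle z - a_\lambda,\, a^*_\lambda\rangle}{\|z - a_\lambda\|} > \lambda\,\alpha^{p-1},\]
so that the supremum on the left over $\gra A$ equals $+\infty$. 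This is precisely the hypothesis \eqref{Codm:ca1} of Proposition~\ref{SiCodomain:1}, which then delivers the desired identity $\overline{\conv[\dom A]} = \overline{P_X[\dom F_A]}$.

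There is essentially no serious obstacle here: the theorem is a clean packaging of the two preceding propositions. The only subtleties worth flagging are the elementary but necessary observation that in part~\ref{part:i} maximality is what converts ``$z \notin \dom A_\lambda$'' into ``$(z,0)$ not monotonically related to $\gra A_\lambda$,'' and the mild sharpening in part~\ref{part:ii} that the distance bound $\|z - a_\lambda\| \geq \alpha$ persists because the witnesses $a_\lambda$ lie in the smaller set $\dom A$, not merely in its closed convex hull.
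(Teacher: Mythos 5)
Your proposal is correct and follows essentially the same route as the paper: part \ref{part:i} uses maximality to upgrade $(z,0)\notin\gra\big(A+\lambda J_p(\cdot-z)\big)$ to ``not monotonically related'' and invokes Proposition~\ref{prop:nc}, while part \ref{part:ii} reruns the computation from the proof of Proposition~\ref{prop:nc} with $\alpha=\di\big(z,\overline{\conv\left[\dom A\right]}\big)$ and feeds the resulting unbounded supremum into Proposition~\ref{SiCodomain:1}. The extra details you flag (that $\dom J_p=X$ keeps $z$ out of the sum's domain, and that $a_\lambda\in\dom A$ preserves the distance bound) are exactly the implicit steps in the paper's argument.
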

\begin{proof}\emph{Part \ref{part:i}.}
Suppose $z\notin\overline{\dom A}$. Then $(z,0)\notin\gra\big(A+  \lambda J_p(\cdot -z)\big)$.
Since $A+  \lambda J_p(\cdot -z)$ is maximally monotone, $(z,0)$ cannot be monotonically related to $\gra \big(A+  \lambda J_p(\cdot -z)\big)$. Then we may directly apply Proposition~\ref{prop:nc}.

\noindent \emph{Part \ref{part:ii}.}
Suppose now that $z\notin\overline{\conv\left[\dom A\right]}$.
 We follow the lines of the proof of Proposition~\ref{prop:nc} to obtain
 $\sup_{(a,a^*)\in\gra A}
\frac{\langle z-a, a^*\rangle}{\| z-a\|}=+\infty$. We then apply Proposition~\ref{SiCodomain:1} and obtain
$$\overline{\conv\left[\dom A\right]}=\overline{P_X\left[\dom F_A\right]},$$
as asserted.
\end{proof}

When $A$ is maximally monotone,
Part (i) of Theorem~\ref{coroll:nc2} holds in lots of attractive cases.

\begin{corollary}[Conditions for near convexity]\label{cor:near} Let $A:X\rightrightarrows X^*$ be maximally monotone. Then $\overline{\dom A}=\overline{\conv\left[\dom A\right]}=\overline{P_X\left[\dom F_A\right]}$ (and thus $\dom A$ is nearly convex) as soon as any of the following hold:
\begin{enumerate}
\item\label{part:ia} $X$ is reflexive;
\item\label{part:ib} $A$ is Verona regular;
\item\label{part:ic}  $A$ is of type (FPV); as holds if
\item\label{part:id} $\inte \dom A \neq \emptyset$; or if
\item\label{part:ie} $\gra A$ is affine.

\end{enumerate}
\end{corollary}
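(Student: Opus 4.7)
The uniform strategy is to verify, in each of the five cases, either the hypothesis of Theorem~\ref{coroll:nc2}\ref{part:i}---namely, that $A + \lambda J_p(\cdot - z)$ is maximally monotone for some $p \in [1,\infty)$, every $z \notin \overline{\dom A}$, and all sufficiently large $\lambda > 0$---or, when that is inconvenient, the hypothesis of Fact~\ref{Sidomain:1} directly. Once either is in hand the chain of equalities follows at once.

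For case \ref{part:ia}, pick any $p \in [1,\infty)$; then $\lambda J_p(\cdot - z)$ is maximally monotone with full domain, so Rockafellar's sum theorem in reflexive Banach spaces supplies the maximal monotonicity of $A + \lambda J_p(\cdot - z)$ with no further work. For case \ref{part:ib}, take $p = 1$, so that $\lambda J_1(\cdot - z) = \lambda\,\partial\|\cdot - z\|$ has bounded range; the characterization of Verona regularity recalled in Section~\ref{s:aux} from \cite[Theorems 1\&2]{VV6} then delivers maximal monotonicity of the sum immediately. Theorem~\ref{coroll:nc2}\ref{part:i} applies in both cases.

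Cases \ref{part:id} and \ref{part:ie} reduce to \ref{part:ic}: $\inte\dom A \neq \varnothing$ classically entails type (FPV) (the Fitzpatrick-Phelps theorem), and Fact~\ref{domain:L1} yields type (FPV) for linear relations (a harmless translation handles the affine case). It then remains to treat \ref{part:ic}; here the cleanest route is to bypass Theorem~\ref{coroll:nc2} and verify Fact~\ref{Sidomain:1} directly, i.e.\ to show that for every $z \notin \overline{\dom A}$,
\[
\sup_{(a,a^*)\in\gra A}\frac{\langle z-a, a^*\rangle}{\|z-a\|} = +\infty.
\]
This identity is essentially the original theorem of Fitzpatrick-Phelps-Veronas that motivated the definition of type (FPV) in the first place.

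The main obstacle is case \ref{part:ic}. A naive application of the (FPV) definition with $U = X$ and test pair $(z, 0)$ only shows that $(z, 0)$ fails to be monotonically related to $\gra A$, producing a single $(a,a^*)\in\gra A$ with $\langle z - a, a^*\rangle > 0$. Upgrading this to an unbounded supremum is delicate because one cannot separate $z$ from $\overline{\dom A}$ by a continuous linear functional without already assuming the convexity we are trying to prove. The remedy is to apply (FPV) with a varying test functional and against a varying open convex set $U$ containing both $z$ and a fixed point of $\dom A$, exploiting the full flexibility of the definition to drive the ratio past any prescribed bound.
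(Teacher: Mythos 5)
Your cases \ref{part:ia} and \ref{part:ib} are exactly the paper's argument (Rockafellar's sum theorem in the reflexive case; the Verona characterization with $p=1$), and your reductions of \ref{part:id} and \ref{part:ie} to type (FPV) are legitimate in principle. The genuine gap is case \ref{part:ic} itself: you correctly identify that the naive use of the (FPV) definition with $U=X$ only yields a single pair with $\langle z-a,a^*\rangle>0$, and that separation of $z$ from $\overline{\dom A}$ is unavailable, but your proposed ``remedy'' --- applying (FPV) with ``a varying test functional and a varying open convex set $U$'' --- is not an argument, it is a restatement of the difficulty. Making this work is precisely the nontrivial content of the known theorem that (FPV) operators satisfy $\overline{\dom A}=\overline{P_X[\dom F_A]}$; it requires a careful choice of balls $U$ meeting $\dom A$, scaled test points, and an estimate of Fitzpatrick-function type (the role played by Fact~\ref{Siimu:1} in such proofs). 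The paper does not reprove this either: it disposes of \ref{part:ic} by citation to \cite[Corollary~3.6]{Yao3}, \cite[Theorem~44.2]{Si2}, and \cite[Proposition~5.2.1]{YaoPhD}. As written, your proposal neither supplies the construction nor invokes a reference, so the central case is unproven.

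A secondary structural point: by routing \ref{part:id} and \ref{part:ie} through \ref{part:ic}, you make them inherit this gap unnecessarily. The paper instead gets \ref{part:id} from the sum theorem of \cite[Theorem~9.6.1]{Bor2} (which directly gives maximal monotonicity of $A+\lambda J_p(\cdot-z)$ since $J_p(\cdot-z)$ has full domain and open domain interior), and gets \ref{part:ie} from Fact~\ref{domain:L1}, which yields not only that $A$ is (FPV) but that $A+\lambda J_p(\cdot-z)=A+\partial\bigl(\tfrac{\lambda}{p}\|\cdot-z\|^p\bigr)$ is itself maximally monotone, so Theorem~\ref{coroll:nc2}\ref{part:i} applies without ever needing the general (FPV) result. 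Adopting those direct routes would leave only \ref{part:ic} dependent on the cited theorem (or on an argument you would still have to write out).
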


\begin{proof}
 \emph{Part \ref{part:ia}.}
First,  in reflexive spaces, we have a Rockafellar's sum theorem implying that $A+\lambda J_p(\cdot-z)$ is always maximally monotone for every $z\in X$ since $\dom J_p(\cdot-z)=X$, see \cite{Rock70,Si2, BorVan}).

\noindent \emph{Part \ref{part:ib}.} Next observe that setting $p=1$ shows that \emph{Verona regular mappings} imply $\overline{\dom A}=\overline{\conv\left[\dom A\right]}=\overline{P_X\left[\dom F_A\right]}$, since as noted above \ the Veronas  show the maximal monotonicity of $A+\lambda J_1(\cdot-z)$ for all $\lambda>0,z \in X$ is equivalent to their definition.

\noindent \emph{Part \ref{part:ic}.}   Likewise, \cite[Corollary~3.6]{Yao3} gives another proof that type (FPV) mappings have this property. Or see \cite[Theorem~44.2]{Si2}, \cite[Proposition~5.2.1, page~107]{YaoPhD}.

 \noindent \emph{Part \ref{part:id}.} This holds for various reasons. For instance it is shown in  \cite{Bor2} that $A$ is type (FPV). It also follows from the sum theorem in \cite[Theorem~9.6.1]{Bor2} which applies to two maximally monotone operators $A,B$ such that $\inte \dom A \cap \inte \dom B \neq \emptyset,$ and so shows that the maximal monotonicity of $A+\lambda J_p(\cdot-z)$ for all $p \ge 1, \lambda >0,z \in X$.

  \noindent \emph{Part \ref{part:ie}.} By Fact~\ref{domain:L1}, we have that $A$ is type (FPV) and that
  $A+\lambda J_p(\cdot-z)$ is maximally monotone for all $\lambda>0,z \in X$. Then we directly apply
  Theorem~\ref{coroll:nc2}. Or see \cite[Theorem~46.1]{Si2}.
\end{proof}

In both  cases \ref{part:ib} and \ref{part:ic}, it is still entirely possible that all maximally monotone operators have the desired property. The only other general conditions we know for near convexity are a relativization of Part \ref{part:id}, namely,  $^{ic}(\conv\dom A)\neq\varnothing$  \cite[Corollary~5]{ZalVoi} and the result below which follows directly from the definition of operators of type (BR):

\begin{proposition}\label{BRFa:1}
Let $A:X\rightrightarrows X^*$ be maximally monotone
 and $(x,x^*)\in X\times X^*$. Assume that $A$
is of type (BR) and that $\inf_{(a,a^*)\in\gra A}\langle
x-a,x^*-a^*\rangle>-\infty$. Then $x\in\overline{\dom A}$ and
$x^*\in\overline{\ran A}$. In particular, \[\overline{\dom A}=\overline{P_X\left[\dom F_A\right]}\quad \mbox{and}
 \quad \overline{\ran A}=\overline{P_{X^*}\left[\dom F_A\right]}.\]
 Consequently, both $\dom A$ and $\ran A$ are nearly convex.
\end{proposition}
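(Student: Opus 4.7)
The plan is to read the two conclusions straight off the definition of type (BR), which essentially packages what is needed: its two free parameters $\alpha,\beta>0$ can be tuned independently, while the finiteness of the infimum is exactly what certifies (BR) to be applicable.

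First I would set $c:=\inf_{(a,a^*)\in\gra A}\langle x-a,x^*-a^*\rangle$, a finite real by hypothesis. To show $x\in\overline{\dom A}$, fix an arbitrary $\alpha>0$ and then choose $\beta>0$ so large that $-\alpha\beta<c$ (which is possible since $c$ is a fixed real); the (BR) hypothesis then holds and yields $(b,b^*)\in\gra A$ with $\|x-b\|<\alpha$. Letting $\alpha\downarrow 0$ gives $x\in\overline{\dom A}$. A symmetric pass—fix a small $\beta>0$ and this time take $\alpha$ large enough that $-\alpha\beta<c$—yields $x^*\in\overline{\ran A}$.

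For the ``In particular'' statements, the bridge between $\dom F_A$ and the finiteness hypothesis above is the elementary identity
\begin{equation*}
\inf_{(a,a^*)\in\gra A}\langle x-a,x^*-a^*\rangle=\langle x,x^*\rangle-F_A(x,x^*),
\end{equation*}
which falls out by expanding the inner product inside the supremum defining $F_A$. Thus every $(x,x^*)\in\dom F_A$ satisfies the hypothesis of the first part, which supplies the inclusions $P_X[\dom F_A]\subseteq\overline{\dom A}$ and $P_{X^*}[\dom F_A]\subseteq\overline{\ran A}$; the reverse inclusions come for free from $\gra A\subseteq\dom F_A$, part of Fact~\ref{f:Fitz}. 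The final near-convexity assertion then follows because $F_A$ is convex (Fact~\ref{f:Fitz} again), so $\dom F_A$ is convex, hence its two coordinate projections are convex, and therefore so are their closures.

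There is no substantive obstacle: the whole argument is bookkeeping with the (BR) parameters. The one subtlety worth flagging is that (BR) is formulated with the strict inequality $>-\alpha\beta$, which is precisely the slack required to force $-\alpha\beta$ strictly below the fixed real number $c$; had (BR) been phrased with ``$\geq$'', a small $\varepsilon$ of extra room would be needed but the conclusion would remain unchanged.
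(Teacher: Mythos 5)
Your proposal is correct and follows essentially the same route as the paper: reduce membership of $P_X[\dom F_A]$ (resp.\ $P_{X^*}[\dom F_A]$) to the finite-infimum hypothesis via the identity $\inf_{(a,a^*)\in\gra A}\langle x-a,x^*-a^*\rangle=\langle x,x^*\rangle-F_A(x,x^*)$, and get near convexity from convexity of $\dom F_A$. The only difference is that you spell out the (BR) parameter-tuning argument for $x\in\overline{\dom A}$ and $x^*\in\overline{\ran A}$, which the paper simply delegates to a citation, and your execution of that step is sound.
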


\begin{proof}
By an easy argument, see \cite[Lemma~1(a)\&(b)]{VV5} or \cite[Fact~2.10]{BBWY}, we have
$x\in\overline{\dom A}$ and
$x^*\in\overline{\ran A}$.

We now show that $\overline{\dom A}=\overline{P_X\left[\dom F_A\right]}$.
By Fact~\ref{f:Fitz}, it suffices to show that
\begin{align}
P_X\left[\dom F_A\right]\subseteq\overline{\dom A}.
\label{Brcp:1}
\end{align}
Let $(z,z^*)\in \dom F_A$. Then we have  $\inf_{(a,a^*)\in\gra A}\langle
x-a,x^*-a^*\rangle>-\infty$. Hence $z\in \overline{\dom A}$ by the above result. Thus
\eqref{Brcp:1} holds and then $\overline{\dom A}=\overline{P_X\left[\dom F_A\right]}$.
Similarly, we have $\overline{\ran A}=\overline{P_{X^*}\left[\dom F_A\right]}$.
Hence
$\dom A$ and $\ran A$ both are nearly convex.
\end{proof}

Note the different flavour of Proposition \ref{BRFa:1} with its implications for near convexity of the range and domain.
We recall also that all dense type (D) operators are of type (BR) (see \cite[Theorem~1.4(4)]{MarSva2}). A detailed analysis of (BR) and type (D) operators is made in \cite{BBWY}.
We now  come to our second main result.

\begin{theorem}\label{ProdLm:3}
Let $A:X\rightrightarrows X^*$ be maximally monotone.
Then
\begin{align*}
\overline{\conv\left[\dom A\right]}=\overline{P_X\left[\dom F_A\right]}.
\end{align*}
\end{theorem}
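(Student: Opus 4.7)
My plan is to prove the two inclusions separately. For ($\supseteq$), Fact~\ref{f:Fitz} gives $F_A=\langle\cdot,\cdot\rangle$ on $\gra A$, so $\gra A\subseteq \dom F_A$ and hence $\dom A\subseteq P_X[\dom F_A]$. Because $F_A$ is convex, $\dom F_A$ is convex, and therefore so is its projection $P_X[\dom F_A]$. Taking convex hull and closure then yields $\overline{\conv[\dom A]}\subseteq\overline{P_X[\dom F_A]}$.

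For the harder ($\subseteq$) inclusion I would argue by contradiction. Assume there exists $(z,z^*)\in\dom F_A$ with $z\notin\overline{\conv[\dom A]}$. Since $\overline{\conv[\dom A]}$ is a closed convex set, Hahn--Banach separation produces some $y^*\in X^*$ and $\epsilon>0$ with $\langle a,y^*\rangle\leq\langle z,y^*\rangle-\epsilon$ for every $a\in\dom A$. The key idea is then to probe the Fitzpatrick function at the shifted dual point $(z,z^*+\lambda y^*)$ for large $\lambda>0$: the only $\lambda$-dependent contribution inside the supremum defining $F_A(z,z^*+\lambda y^*)$ is $\lambda\langle a,y^*\rangle$, which is uniformly bounded above by $\lambda(\langle z,y^*\rangle-\epsilon)$, giving the linear upper bound
\begin{equation*}
F_A(z,z^*+\lambda y^*)\leq F_A(z,z^*)+\lambda\bigl(\langle z,y^*\rangle-\epsilon\bigr).
\end{equation*}

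Combining this with the Fitzpatrick inequality $\langle z,z^*+\lambda y^*\rangle\leq F_A(z,z^*+\lambda y^*)$ from Fact~\ref{f:Fitz}---which is precisely where maximal monotonicity enters---cancels the $\lambda\langle z,y^*\rangle$ contributions and leaves $\lambda\epsilon\leq F_A(z,z^*)-\langle z,z^*\rangle$. Since the right-hand side is finite while $\lambda>0$ is arbitrary, this is the desired contradiction, so $P_X[\dom F_A]\subseteq\overline{\conv[\dom A]}$ and the claimed equality follows by taking closure.

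I do not foresee a substantive obstacle: the only conceptual step is spotting the separation-plus-shift device applied to $y^*$; everything else is a routine computation with the definition of $F_A$. One could alternatively try to verify the monotonic-relation hypothesis of Theorem~\ref{coroll:nc2}(\ref{part:ii}), but producing a sequence $(a_\mu,a_\mu^*)\in\gra A$ for which $\langle z-a_\mu,a_\mu^*\rangle/\|z-a_\mu\|^p$ blows up appears to require extra control on $\|z-a_\mu\|$, so the direct Fitzpatrick route above seems cleaner.
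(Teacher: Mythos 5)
Your proof is correct. The easy inclusion is handled exactly as in the paper (via Fact~\ref{f:Fitz} and convexity of $\dom F_A$), and your contradiction argument for the hard inclusion is sound: the separation estimate $\scal{a}{y^*}\le\scal{z}{y^*}-\epsilon$ on $\dom A$ does give the linear upper bound $F_A(z,z^*+\lambda y^*)\le F_A(z,z^*)+\lambda(\scal{z}{y^*}-\epsilon)$, and pairing this with the inequality $\scal{z}{z^*+\lambda y^*}\le F_A(z,z^*+\lambda y^*)$ from Fact~\ref{f:Fitz} yields $\lambda\epsilon\le F_A(z,z^*)-\scal{z}{z^*}<+\infty$ for all $\lambda>0$, the desired contradiction. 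The route differs from the paper's in packaging rather than in substance: the paper translates the graph, setting $\gra B=\gra A-\{(0,z^*)\}$, notes $F_B(z,0)=F_A(z,z^*)-\scal{z}{z^*}$ is finite, and then uses maximality of $B$ directly to extract points $(b_n,b_n^*)\in\gra B$ with $\scal{z-b_n}{b_n^*}>n\delta$, forcing $F_B(z,0)=+\infty$. You instead probe $F_A$ along the dual ray $z^*+\lambda y^*$ and let the recorded Fitzpatrick inequality (whose proof is exactly the maximality step of selecting a graph point not monotonically related to the test point) do that work for you. What your version buys is brevity and the absence of both the auxiliary operator $B$ and the explicit divergent sequence; what the paper's version buys is that the use of maximality is completely explicit, invoking only the definition rather than the full force of the inequality $\scal{\cdot}{\cdot}\le F_A$. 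Your closing remark is also apt: verifying the hypothesis of Theorem~\ref{coroll:nc2}\ref{part:ii} for a general maximally monotone $A$ is indeed not the efficient path here, and the direct Fitzpatrick computation is the right call.
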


\begin{proof}
By Fact~\ref{f:Fitz}, it suffices to show that
\begin{align}
P_X\left[\dom F_A\right]\subseteq \overline{\conv\left[\dom A\right]}.
\label{Lsee:10}
\end{align}
Let $(z,z^*)\in\dom F_A$.  We shall show that
\begin{align}
z\in\overline{\conv\left[\dom A\right]}.
\label{Lsee:11}
\end{align}
Suppose to the contrary that
\begin{align}
z\notin\overline{\conv\left[\dom A\right]}.
\label{Lsee:12}
\end{align}
Define $B:X\rightrightarrows X^*$ by
\begin{align}
\gra B:=\gra A-\{(0,z^*)\}.
\end{align}
Then we have
\begin{align}
F_B(z,0)&=\sup_{(x,x^*)\in\gra B}\big\{\langle z,x^*\rangle-\langle x,x^*\rangle\big\}\nonumber\\
&=\sup_{(y,y^*)\in\gra A}\big\{\langle z,y^*-z^*\rangle-\langle y,y^*-z^*\rangle\big\}\nonumber\\
&=\sup_{(y,y^*)\in\gra A}\big\{\langle z,-z^*\rangle+\langle z,y^*\rangle+\langle y,z^*\rangle-\langle y,y^*\rangle\big\}\nonumber\\
&=\langle z,-z^*\rangle+F_A(z,z^*).\label{Lsee:13}
\end{align}
Since $(z,z^*)\in\dom F_A$, by \eqref{Lsee:13}, there exists $r\in\RR$ such that
\begin{align}
F_B(z,0)\leq r.\label{Lsee:14a}
\end{align}
By construction, $B$ is maximally monotone and $\dom B=\dom A$. Then
$z\notin\overline{\conv\left[\dom B\right]}$ by
\eqref{Lsee:12}. By the Separation theorem, there exist $\delta>0$ and  $y_0^*\in X^*$ with $\|y^*_0\|=1$
such that
\begin{align}
\langle y_0^*, z-b\rangle>\delta, \quad \forall b\in \conv\left[\dom B\right].
\label{Lsee:14}
\end{align}
Let $n\in\NN$.
Since $z\notin\overline{\conv\left[\dom B\right]}$, $(z, ny_0^*)\notin\gra B$.
By the maximal monotonicity of $B$, there exists $(b_n, b_n^*)\in\gra B$ such that
\begin{align}
&\langle z-b_n, b^*_n\rangle+\langle ny_0^*, b_n\rangle>\langle n y_0^*, z\rangle
\quad \Rightarrow\quad \langle z-b_n, b^*_n\rangle>\langle ny_0^*, z-b_n\rangle\nonumber\\
&\quad \Rightarrow \langle z-b_n, b^*_n\rangle>n\delta\quad\text{(by \eqref{Lsee:14})}.
\label{Lsee:15}
\end{align}
Then we have
\begin{align*}
F_B(z,0)&\geq \sup_{n\in\NN}\big\{\langle z-b_n, b^*_n\rangle\big\}\\
&\geq \sup_{n\in\NN}\big\{n\delta \big\}\\
&=+\infty,\end{align*}
which contradicts \eqref{Lsee:14a}.
Hence $z\in \overline{\conv\left[\dom A\right]}$ and in consequence \eqref{Lsee:10} holds.
\end{proof}

\begin{remark} As already noted
Theorem~\ref{ProdLm:3}
provides an affirmative answer to a question
 posed by  Simons in
 \cite[Problem~28.3, page~112]{Si2}.
\end{remark}

\textbf{Question:} The following question is still unresolved:
 \vskip 3mm

\begin{quotation}
\noindent Let $A:X\rightrightarrows X^*$ be maximally monotone.
Is $\overline{\dom A}$ necessarily convex?
In light of Theorem  ~\ref{ProdLm:3} this is equivalent to asking if
$$\overline{\dom A}=\overline{P_X\left[\dom F_A\right]}$$
always holds?
\end{quotation}

\section*{Acknowledgment}
Jonathan Borwein and Liangjin Yao
were both partially supported by the Australian Research Council.


\end{document}